\DeclareMathOperator{\Tr}{Tr}
\DeclareMathOperator{\vol}{vol}
\newtheorem{theorem}{Theorem}
\newtheorem*{theorem*}{Theorem}
\newtheorem*{theoreme*}{Théorème}
\newtheorem{proposition}{Proposition}[section]
\newtheorem{definition-f}[proposition]{Définition}
\newtheorem{lemma}[proposition]{Lemma}
\newtheorem{remark}{Remark}
\newtheorem*{remark*}{Remark}
\title[Lower Bound for $\Theta$]{A lower bound for the $\Theta$ function on manifolds without conjugate points.}
\author{Yannick Bonthonneau}
\email{yannick.bonthonneau@cirget.ca}
\address{CRM, UQ\`AM, Avenue du Pr\'esident Kennedy, Montr\'eal, Qu\'ebec, Canada.}
\begin{document}

\begin{abstract}
In this short note, we prove that the usual $\Theta$ function on a Riemannian manifold without conjugate points is uniformly bounded from below. This extends a result of Green in two dimensions. 
This elementary lemma implies that the Bérard remainder in the Weyl law is valid for a manifold without conjugate points, without any restriction on the dimension.
\end{abstract}

\maketitle

\section{Introduction}

Let $(M,g)$ be a compact manifold of dimension $n$. Then the spectrum of its Laplacian is discrete. We denote the eigenvalues by $\mu_0 = 0 < \mu_1 \leq \dots$, and its counting function by
\begin{equation*}
N(\lambda) := \# \{ \mu_i \leq \lambda^2 \}.
\end{equation*}
In 1977, Bérard proved the following: 
\begin{theorem}[\cite{Berard-77}]\label{thm:old}
Assume that $n=2$ and $M$ has no conjugate points, or that $M$ has non-positive sectional curvature. Then, as $\lambda$ tends to infinity,
\begin{equation}\label{eq:Berard-remainder}
N(\lambda) = \frac{\vol(B^\ast M)}{(2\pi)^n} \lambda^n + \mathcal{O}\left( \frac{\lambda^{n-1}}{\log \lambda} \right).
\end{equation}
\end{theorem}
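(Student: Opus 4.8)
The plan is to follow Bérard's argument \cite{Berard-77}, which is the Avakumović--Levitan--Hörmander method run with the time parameter in the wave trace pushed out to length of order $\log\lambda$. Let $e(x,y,\lambda)$ be the Schwartz kernel of the spectral projector of $\Delta$ onto eigenvalues $\le\lambda^2$, so that $N(\lambda)=\int_M e(x,x,\lambda)\,dV_g(x)$, and recall $\vol(B^\ast M)=\vol(B^n)\vol(M)$. It suffices to prove, uniformly in $x\in M$,
\begin{equation*}
e(x,x,\lambda)=\frac{\vol(B^n)}{(2\pi)^n}\,\lambda^n+\mathcal{O}\left(\frac{\lambda^{n-1}}{\log\lambda}\right).
\end{equation*}
Fix an even $\rho\in\mathcal{S}(\R)$ with $\hat\rho\ge 0$, $\rho(0)=1$, and $\supp\hat\rho\subset(-\varepsilon,\varepsilon)$ for $\varepsilon$ below the injectivity radius, set $T=c\log\lambda$ and $\rho_T(s)=T\rho(Ts)$. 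Since $\lambda\mapsto e(x,x,\lambda)$ is nondecreasing and $\mathcal{O}(\lambda^n)$, Hörmander's Tauberian lemma (using the elementary a priori bound $\partial_\mu e(x,x,\cdot)\ast\rho_T=\mathcal{O}(\lambda^{n-1})$ that the analysis below also gives) reduces the claim to the estimate $(\partial_\mu e(x,x,\cdot)\ast\rho_T)(\mu)=\tfrac{n\vol(B^n)}{(2\pi)^n}\mu^{n-1}+\mathcal{O}(\lambda^{n-1}/\log\lambda)$ for $|\mu-\lambda|\le1$, the mollification itself costing only $\mathcal{O}(\lambda^{n-1}/T)$. By Fourier inversion the left-hand side is, up to $\mathcal{O}(\mu^{-\infty})$ and a harmless constant, $\frac1{2\pi}\int\hat\rho(t/T)\,e^{i\mu t}\,u(t,x,x)\,dt$, where $u(t,\cdot,\cdot)$ is the kernel of $\cos(t\sqrt{\Delta})$.

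The next step is to unfold $u$ on the universal cover $(\tilde M,\tilde g)$: finite propagation speed and uniqueness for the wave equation give $u(t,x,y)=\sum_{\gamma\in\Gamma}\tilde u(t,\tilde x,\gamma\tilde y)$, where $\tilde u$ is the wave kernel of $\tilde M$ and $\Gamma=\pi_1(M)$. I would split $\hat\rho(t/T)=\chi(t)+(\hat\rho(t/T)-\chi(t))$ with $\chi\in C_c^\infty$, $\chi\equiv1$ near $0$ and $\supp\chi\subset(-\varepsilon,\varepsilon)$. In the $\chi$ piece only $\gamma=e$ survives (for $\gamma\ne e$ one has $d(\tilde x,\gamma\tilde x)\ge\varepsilon$, so $\tilde u(t,\tilde x,\gamma\tilde x)=0$ for $|t|<\varepsilon$ by finite speed), and the classical Hadamard parametrix on a geodesic ball produces the main term $\tfrac{n\vol(B^n)}{(2\pi)^n}\mu^{n-1}$ with a remainder $\mathcal{O}(\mu^{n-2})$, which is $\le\lambda^{n-1}/T$ as soon as $T\le\lambda$. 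Since $\tilde M$ has no conjugate points, $\exp_{\tilde x}$ is a diffeomorphism, so there are no nontrivial geodesic loops at $\tilde x$ and $\tilde u(t,\tilde x,\tilde x)$ is smooth for $t\ne0$; hence the $\gamma=e$ contribution of the second piece is $\mathcal{O}(\lambda^{-\infty})$. What remains is the sum over $\gamma\ne e$ with $d_\gamma:=d(\tilde x,\gamma\tilde x)\le\varepsilon T$ (again by finite speed, farther $\gamma$ do not meet the support of $\hat\rho(t/T)$).

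The heart of the proof is the bound on one such term. Because $\tilde M$ has no conjugate points, the Hadamard parametrix for $\tilde u(t,\tilde x,\gamma\tilde x)$ is globally valid in $t$, with principal amplitude $\Theta(\tilde x,\gamma\tilde x)^{-1/2}$, where $\Theta$ is the volume density of $\tilde g$ in geodesic polar coordinates; its finitely many transport terms and its remainder are controlled uniformly over $|t|\le\varepsilon T$ and over all pairs $(\tilde x,\gamma\tilde x)$ because $\tilde M$ has bounded geometry (it covers the compact $M$). A stationary phase computation then shows that this $\gamma$-term contributes $\mathcal{O}\bigl(\lambda^{(n-1)/2}\,\Theta(\tilde x,\gamma\tilde x)^{-1/2}\bigr)$, uniformly. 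This is where the hypothesis enters, through a Green-type lower bound for $\Theta$: when $n=2$ it is precisely Green's lemma, $\Theta(\tilde x,\tilde y)\ge c_0>0$ for $d(\tilde x,\tilde y)\ge1$, while under non-positive sectional curvature it is Günther's inequality $\Theta(\tilde x,\tilde y)\ge d(\tilde x,\tilde y)^{n-1}$, valid in every dimension. In either case $\Theta(\tilde x,\gamma\tilde x)^{-1/2}$ is bounded, so the $\gamma$-term is $\mathcal{O}(\lambda^{(n-1)/2})$. Finally, the orbit $\{\gamma\tilde x\}$ grows at most exponentially, $\#\{\gamma\in\Gamma:d_\gamma\le R\}\le Ce^{hR}$ for some $h<\infty$; summing over $d_\gamma\le\varepsilon T$ gives a total of $\mathcal{O}\bigl(\lambda^{(n-1)/2}e^{h\varepsilon T}\bigr)=\mathcal{O}\bigl(\lambda^{(n-1)/2+h\varepsilon c}\bigr)$, which is $\mathcal{O}(\lambda^{n-1}/\log\lambda)$ once $c<(n-1)/(2h\varepsilon)$. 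Adding the three contributions --- mollification, short-time remainder, long-time sum --- and running the Tauberian lemma finishes the argument.

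The step I expect to be the genuine obstacle is exactly the one just described: the uniform, long-time control of the Hadamard parametrix on $\tilde M$, and in particular the lower bound for its amplitude $\Theta^{-1/2}$. For $n=2$ this rests on Green's lemma and under non-positive curvature on the Bishop--Günther comparison, but in higher dimensions with curvature of no fixed sign the required lower bound for $\Theta$ is precisely the elementary lemma this note is devoted to; once it is available, Bérard's scheme applies with no change. The remaining ingredients --- the Tauberian reduction, the unfolding on the universal cover, and the exponential counting of deck transformations --- are routine.
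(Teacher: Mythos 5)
Your proposal is a correct reconstruction of Bérard's original argument, which the paper cites (Theorem \ref{thm:old} is not reproved here); the Tauberian reduction, unfolding on the universal cover, Hadamard parametrix, exponential orbit counting, and the role of the lower bound on the geodesic volume density all match the structure the paper describes. One small caution: in the paper's notation it is $\vartheta=d^{n-1}\Theta$ that is bounded below (Günther gives $\Theta\geq 1$, equivalently $\vartheta\geq d^{n-1}$, not $\Theta\geq d^{n-1}$), and Bérard's Lemma \ref{lemma:old} tolerates growth $e^{\mathcal{O}(d)}$ in the amplitudes $u_k$ rather than requiring uniform boundedness, but this slack is absorbed by shrinking $c$ and does not affect the argument.
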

In this note, we prove
\begin{theorem}\label{thm:new}
It suffices to assume that $M$ does not have conjugate points to obtain the above result.
\end{theorem}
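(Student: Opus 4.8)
\emph{Strategy.} Everything reduces to one elementary geometric statement, of independent interest. Lift $g$ to the universal cover $\tilde M$. Since $M$ has no conjugate points, Cartan's theorem says the map $\exp_p \colon T_p\tilde M \to \tilde M$ is a diffeomorphism for every $p \in \tilde M$; in geodesic polar coordinates about $p$ write $\exp_p^\ast(d\vol) = \Theta(t,v)\, dt\, d\sigma(v)$ for $v \in S_p\tilde M$ and $t>0$, so that $\Theta(t,v) \sim t^{n-1}$ as $t \to 0^+$. The claim is:
\[
\text{there is } c > 0 \text{ such that } \Theta(t,v) \ge c \text{ for all } p, \text{ all } v \in S_p\tilde M, \text{ and all } t \ge 1 .
\]
This is the $n$-dimensional form of Green's two-dimensional estimate. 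Granting it, Bérard's proof of Theorem~\ref{thm:old} goes through without change: in \cite{Berard-77} the restriction to $n=2$ and the curvature hypothesis serve \emph{only} to furnish such a lower bound for $\Theta$ on $\tilde M$ — Green's theorem when $n=2$, the Rauch comparison theorem ($\Theta \ge t^{n-1}$) under non-positive curvature — and this bound is the one geometric fact used to control the lifted wave kernel up to time $\sim \log\lambda$, hence the accompanying sum over the deck group and the $\log\lambda$ gain. So it suffices to prove the displayed lemma.

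\emph{Reduction to the stable Jacobi tensor.} Fix $p$, $v$ and set $\gamma = \gamma_{p,v}$. Along $\gamma$, let $A(t)$ be the $(n-1) \times (n-1)$ matrix solution of the Jacobi equation $A'' + R(t)A = 0$ on $\dot\gamma^\perp$ with $A(0) = 0$, $A'(0) = \mathrm{Id}$; then $\Theta(t,v) = \det A(t)$, which is positive for $t > 0$ since $M$ has no conjugate points, and $\|R(t)\| \le K$ uniformly by compactness of $M$. By the classical theory of manifolds without conjugate points there is a \emph{stable Jacobi tensor} $S(t)$: the solution with $S(0) = \mathrm{Id}$ obtained as the limit of the solutions $A_T$ defined by $A_T(0) = \mathrm{Id}$, $A_T(T) = 0$; it is invertible for every $t$, and its Riccati solution $U := S'S^{-1}$ is symmetric. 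Since $U(t)$ at $\gamma(t)$ is a continuous function of the point $(\gamma(t),\dot\gamma(t))$ of the \emph{compact} unit tangent bundle, it is uniformly bounded: $\|U(t)\| \le C_1$ for every $t \in \R$ and every geodesic.

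\emph{The main estimate.} Because $U(0) = S'(0)$ is symmetric, the matrix Wronskian $(S')^{\mathsf T}S - S^{\mathsf T}S'$ vanishes at $0$, hence identically, and a reduction-of-order computation yields
\[
A(t) = S(t) \int_0^t \big(S(s)^{\mathsf T} S(s)\big)^{-1}\, ds .
\]
Put $\psi(s) := (\det S(s))^{1/(n-1)} > 0$, so that $|\psi'/\psi| = \tfrac{1}{n-1}|\Tr U| \le C_1$. Taking determinants and using Minkowski's determinant inequality (i.e.\ the concavity of $M \mapsto (\det M)^{1/(n-1)}$ on positive-definite matrices),
\[
\Theta(t,v) = \det A(t) \ \ge\ \Big( \psi(t) \int_0^t \psi(s)^{-2}\, ds \Big)^{n-1} .
\]
For $t \ge 1$, restrict the integral to $[t-1,t]$, where $\psi(s) \le e^{C_1}\psi(t)$; the bracket is then $\ge e^{-2C_1}\psi(t)^{-1}$, whence
\[
\Theta(t,v) \ \ge\ e^{-2(n-1)C_1}\, (\det S(t))^{-1} \qquad (t \ge 1) .
\]

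\emph{The crux, and conclusion.} It remains to bound $\det S(t)$ \emph{from above}, uniformly in $t \ge 0$ and in the geodesic — equivalently, to show that the stable Jacobi tensor distorts $(n-1)$-volume by a bounded factor. I expect this to be the main obstacle: the bound $\|U\| \le C_1$ by itself only gives the useless $\det S(t) \le e^{(n-1)C_1 t}$, so the no-conjugate-points hypothesis must be used here in an essential, global way (in dimension $2$ this is exactly Green's observation that the stable solution $y_s$ stays bounded). Once $\det S(t) \le C_2$ is established, the last display gives $\Theta(t,v) \ge e^{-2(n-1)C_1}/C_2 =: c > 0$ for $t \ge 1$; this proves the lemma and, by the first paragraph, Theorem~\ref{thm:new}.
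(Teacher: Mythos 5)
Your approach runs parallel to the paper's — Green's stable Jacobi tensor, variation of constants, a determinant inequality — but the step you yourself flag as ``the crux'' is a genuine gap, and it is precisely the main content of the paper's argument, so the theorem is not yet proved. You reduce to $\det S(t)\le C_2$ uniformly in $t$ and in the geodesic, and you correctly observe that the Riccati bound $\|U\|\le C_1$ alone only yields exponential growth of $\det S$. (A smaller issue along the way: your uniform bound $\|U\|\le C_1$ rests on continuity of the stable Riccati solution as a function on the compact $SM$, but that continuity is itself a nontrivial theorem of Eberlein of roughly the same depth as what you are trying to prove. The paper bypasses it by using the quantitative comparison, Lemma~\ref{lemma:Ricatti}, which gives $|\mathbb{V}(t)|\le k\coth kt$ for \emph{any} symmetric Riccati solution defined on $(0,\infty)$, with no appeal to continuity or compactness.)

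Here is how the paper closes the gap, and how your computation could be finished. The right quantity to control is not $\det S(t)$ but $\det M(t)$, where $M(t)=\int_t^\infty \mathbb{A}(\ell)^{-1}\mathbb{A}(\ell)^{-1\ast}\,d\ell$: in the paper's notation $S(t)=\mathbb{D}_{+\infty}(t)=\mathbb{A}(t)M(t)$, so $\det S(t)=\vartheta(t)\,\det M(t)$. Substituting this into your final display turns it into $\vartheta(t)^2\ge e^{-2(n-1)C_1}/\det M(t)$ for $t\ge 1$, which is self-strengthening: no a priori control of $\vartheta$ or of $\det S$ is required, only an upper bound $\det M(t)\le C_2'$. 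The paper establishes this via a Wronskian/positivity argument: for $s,t>0$ the constant matrix $\mathbb{N}_{s,t}=\mathbb{D}_{-s}'(0)-\mathbb{D}_t'(0)$ is shown to be symmetric and positive definite, whence $\mathbb{D}_t'(0)\le \mathbb{D}_{-s}'(0)$ and, for $t>s>0$, $M(t)\le \mathbb{D}_{-s}'(0)-\mathbb{D}_s'(0)$. The right-hand side depends only on the Jacobi data over the fixed short interval $[-s,s]$, and a Banach fixed-point (Cauchy--Lipschitz) estimate bounds it uniformly in $(x,u)$ in terms of $k_{\max}=\sup\|\mathbb{K}\|$ alone, once $s<\sqrt{2/k_{\max}}$. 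That local-to-global comparison is exactly the missing idea; without it your argument, while correct as far as it goes, does not reach the conclusion.
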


This really \emph{is} an improvement of theorem \ref{thm:old}, as there exist manifolds without conjugate points whose curvature has no sign. One can find such examples in Gulliver \cite{Gulliver-75}, or Ballmann-Brin-Burns \cite{Ballmann-Brin-Burns-87}.

We will not enter into all the details of the original proof, as we will just make an observation on a crucial point in the arguments of Bérard. To obtain the theorem, Bérard studied the local behaviour of the wave trace via the Hadamard parametrix. The kernel $K(t,x,x')$ of the wave operator $\cos t \sqrt{-\Delta}$ on $\widetilde{M}$ -- the universal cover of $M$ -- has an expansion of the form
\begin{equation}\label{eq:Hadamard}
K(t,x,x') = C_0 \sum_{k\geq 0} u_k(x,x') |t| \frac{(t^2 - d(x,x')^2)^{k- (n+1)/2}}{\Gamma(k-(n-1)/2)} \mod(C^\infty)
\end{equation}
The coefficients $u_k$ satisfy certain transport equations along the geodesic between $x$ and $x'$. The expansion \eqref{eq:Hadamard} is valid on the universal cover of $M$ as soon as $M$ has no conjugate points. A critical part of the proof of Bérard, which is the \emph{only} spot where the negative curvature assumption is used, is the lemma:
\begin{lemma}[\cite{Berard-77}]\label{lemma:old}
Let $(N,g)$ be the universal cover of a compact manifold without conjugate points. When $n=2$ or if the curvature is non-positive, for all $k\geq 0$ and $l \geq 0$,
\begin{equation*}
\Delta^l_{x'} u_k(x, x') = \mathcal{O}(1) e^{\mathcal{O}(d(x,x'))}.
\end{equation*}
\end{lemma}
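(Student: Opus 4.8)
The plan is to reduce Lemma~\ref{lemma:old} to pointwise bounds, above and below, on the volume density, and then to establish those bounds. Set $\Theta(x,x'):=\bigl|\det d(\exp_x)_{\exp_x^{-1}(x')}\bigr|$ on $N=\widetilde M$; since $M$ has no conjugate points, $\exp_x\colon T_xN\to N$ is a diffeomorphism, so $\Theta$ is well defined and positive and $\Theta(x,x)=1$. Recall the Hadamard transport recursion: $u_0(x,x')=\Theta(x,x')^{-1/2}$ and, with $\gamma=\gamma_{x,x'}\colon[0,1]\to N$ the geodesic from $x$ to $x'$,
\[
u_k(x,x')=\Theta(x,x')^{-1/2}\int_0^1 s^{k-1}\,\Theta\bigl(x,\gamma(s)\bigr)^{1/2}\,\Delta_{x'}u_{k-1}\bigl(x,\gamma(s)\bigr)\,ds .
\]
I would prove the stronger statement $\nabla^m_{x'}u_k(x,x')=\mathcal O(1)e^{\mathcal O(d(x,x'))}$ for all $m,k\ge 0$, by induction on $k$: the base case $k=0$ is the bound on $\Theta^{-1/2}$, and for the inductive step one applies $\nabla^m_{x'}$ to the integrand, the $x'$-dependence there entering only through the moving point $\gamma_{x,x'}(s)$ and its $x'$-derivatives (Jacobi fields along $\gamma$ vanishing at the fixed end $x$, hence $\mathcal O(1)e^{\mathcal O(d)}$ by comparison). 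The whole scheme closes once one knows, uniformly on $N\times N$,
\[
\Theta(x,x')\ \ge\ c\,e^{-\alpha\,d(x,x')},\qquad \bigl|\nabla^m_{x'}\Theta(x,x')\bigr|\ \le\ C_m\,e^{C_m d(x,x')}\quad(m\ge 0),
\]
because then $\Theta^{-1/2}$ and all its $x'$-derivatives are again $\mathcal O(1)e^{\mathcal O(d)}$ (each $\nabla^m\Theta^{-1/2}$ is a universal polynomial in $\Theta^{-1/2}$ and the $\nabla^{\le m}\Theta$), and, in the same way, the lower bound controls $\|(d\exp_x)^{-1}\|$ and hence the $x'$-derivatives of the geodesic used above.

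The upper bounds are routine comparison geometry. As $M$ is compact its sectional curvature is $\ge -\kappa^2$ for some $\kappa$, so by the Rauch comparison theorem a Jacobi field $J$ with $J(0)=0$, $|J'(0)|=1$ satisfies $|J(r)|\le\kappa^{-1}\sinh(\kappa r)$, whence $\Theta(x,x')\le\bigl(\sinh(\kappa d)/(\kappa d)\bigr)^{n-1}=\mathcal O(1)e^{\mathcal O(d)}$. The derivative bounds follow by differentiating the Jacobi equation along $\gamma$ — its coefficients, the curvature and its covariant derivatives, are uniformly bounded since $M$ is compact — and applying Gronwall, which yields at most exponential growth in $d$.

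The lower bound is where the real work sits. Along a unit-speed geodesic $\gamma$ let $A$ be the Jacobi tensor with $A(0)=0$, $A'(0)=I$; then $U(r):=A'(r)A(r)^{-1}$ solves the matrix Riccati equation $U'+U^2+R(r)=0$ with $U(r)\sim r^{-1}I$ near $0$, and $\tfrac{d}{dr}\log\Theta(\gamma(r))=\Tr U(r)-\tfrac{n-1}{r}$. Because $M$ is compact and has no conjugate points, the stable and unstable Green subbundles are defined along $\gamma$, with (symmetric) shape operators $U_s(r)\le U_u(r)\le U(r)$, and Riccati comparison with the model space of curvature $-\kappa^2$ — using the absence of conjugate points to keep the comparison tensors invertible over unit time — bounds them uniformly: $\|U_s(r)\|,\|U_u(r)\|\le\kappa\coth\kappa$. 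Hence $\Tr U(r)\ge\Tr U_u(r)\ge -(n-1)\kappa\coth\kappa$, and integrating out from a small fixed $r_0$ (using continuity and $\Theta(x,x)=1$ for $d\le r_0$) gives $\Theta(\gamma(r))\ge c\,e^{-\alpha r}$ for some $c,\alpha>0$. This already proves Lemma~\ref{lemma:old}.

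It does \emph{not}, however, give the uniform bound $\Theta\ge c_0>0$ announced in the abstract: the pointwise Riccati comparison genuinely loses an exponential. To recover it — this is Green's theorem when $n=2$, where reduction of order yields the closed expression $\Theta(\gamma(r))=r^{-1}y_s(0)y_s(r)\int_0^r y_s(\rho)^{-2}\,d\rho$ with $y_s$ the stable Jacobi field — one has to pass from the pointwise estimate to a statement about time-averages of $U_s$ along geodesics, and then invoke the recurrence of the geodesic flow on the compact unit tangent bundle $SM$: Poincaré recurrence, or Birkhoff's theorem for the finite invariant Liouville measure, together with the time-reversal identity $\int_{SM}(\Tr U_s+\Tr U_u)\,d\mu=0$ and $U_s\le U_u$. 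I expect this conservativity argument, carried out in arbitrary dimension, to be the one genuinely nontrivial point; everything preceding it is bookkeeping with the transport recursion and with comparison geometry.
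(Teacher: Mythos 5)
The paper does not reprove this lemma: it is Bérard's, and the paper explicitly defers to the appendix of \cite{Berard-77} for the transport‐recursion induction. Your reduction of the $u_k$‑estimates to upper and lower bounds on $\Theta$ and its $x'$‑derivatives, and the induction on $k$ allowing all orders of $\nabla_{x'}$, is indeed the shape of Bérard's argument. The notable feature of your proposal is the lower bound: you derive $\Theta\ge c\,e^{-\alpha d}$ from the existence of a bounded symmetric Riccati solution (a Green shape operator) along every geodesic and the comparison $U\ge$ that solution — and you never invoke $n=2$ or $K\le 0$. That is precisely the content of the paper's Lemma~\ref{lemma:new}/\ref{lemma:lower-bound} (with the weaker, exponential version already singled out as sufficient in Remark~(2) of Section~2), so in trying to prove Lemma~\ref{lemma:old} you have effectively sketched the paper's new lemma. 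Two small corrections to that step: it is cleaner to compare $U=\mathbb A'\mathbb A^{-1}$ with the \emph{stable} operator $\mathbb V=\mathbb D_{+\infty}'\mathbb D_{+\infty}^{-1}$, which the paper proves is defined and symmetric on all of $[0,\infty)$ via the explicit integral formula for $\mathbb D_{+\infty}$, rather than with $U_u$, whose invertibility on $(0,\infty)$ you have not justified; and the Riccati comparison bound is $k\coth(kt)$, not $k\coth k$, so it only becomes a uniform constant after integration.

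Your closing paragraph, however, points in the wrong direction. The uniform lower bound $\vartheta\ge c_0>0$ of Lemma~\ref{lemma:lower-bound} does \emph{not} require any ergodic input (Poincaré recurrence, Birkhoff averages, or the conservativity identity you suggest). The paper obtains it by a purely local ODE argument: from the identity $\mathbb U(t)-\mathbb V(t)=\mathbb A(t)^{-1\ast}M(t)^{-1}\mathbb A(t)^{-1}$ with $M(t)=\int_t^\infty \mathbb A^{-1}\mathbb A^{-1\ast}$, combined with the Riccati bound $|\mathbb U-\mathbb V|\le 2k\coth kt$, one gets $\|\mathbb A(t)^{-1}\|^2\le 2k\coth kt\,\|M(t)\|_2$; then $\|M(t)\|_2\le\|\mathbb D_{-s}'(0)-\mathbb D_s'(0)\|_2$ for $t>s$, and the latter is bounded uniformly in $(x,u)$ for a fixed small $s$ by a Cauchy–Lipschitz/Banach‑fixed‑point estimate on the short‑time Jacobi fields $\mathbb J_1,\mathbb J_2$, using only a bound on the sectional curvature. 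So the ``genuinely nontrivial point'' you anticipate is not there; the argument never leaves a single geodesic.
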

To prove theorem \ref{thm:new}, it suffices to establish
\begin{lemma}\label{lemma:new}
The conclusion of lemma \ref{lemma:old} holds with the sole assumption that $(N,g)$ is a complete, simply connected Riemannian manifold without conjugate points and bounded geometry.
\end{lemma}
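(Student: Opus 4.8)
The plan is to trace through Bérard's argument and isolate its only curvature-sign-dependent ingredient, a bound on the Hadamard coefficient $u_0$, which will follow from a lower bound for $\Theta$ valid on any manifold without conjugate points of bounded geometry; the rest of the Hadamard construction is then propagated using only bounded geometry. Recall that $u_0(x,x')=\Theta(x,x')^{-1/2}$, where $\Theta(x,x')=|\det d(\exp_x)_{\exp_x^{-1}(x')}|$ is the Jacobian of the exponential map (so $\Theta(x,x)=1$), and that the higher coefficients are produced by the Hadamard transport equations, which along the geodesic $\gamma_{x,x'}\colon[0,1]\to N$ from $x$ to $x'$ read, up to harmless universal constants,
\[
u_k(x,x')=\Theta(x,x')^{-1/2}\int_0^1 s^{k-1}\,\Theta\big(x,\gamma_{x,x'}(s)\big)^{1/2}\,\big(\Delta_2 u_{k-1}\big)\big(x,\gamma_{x,x'}(s)\big)\,ds,
\]
$\Delta_2$ being the Laplace--Beltrami operator in the second variable. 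Since $(N,g)$ is simply connected without conjugate points, each $\exp_x$ is a diffeomorphism, so $\Theta>0$, $u_0$ is smooth on $N\times N$, and the formula defines all the $u_k$. Granting this, Lemma \ref{lemma:new} splits into: (a) $\Theta^{\pm1/2}$ and all of its derivatives in the second variable are $\mathcal{O}(1)e^{\mathcal{O}(d(x,x'))}$; and (b) these bounds propagate through the displayed transport equations. Part (b) is routine bookkeeping: bounded geometry — which here amounts to uniform bounds $\|\nabla^jR\|\le K_j$ together with infinite injectivity radius — makes $\Delta_2$, the Christoffel symbols, and the map $(x,x',s)\mapsto\gamma_{x,x'}(s)$ with all its derivatives uniformly controlled in normal coordinates, so one differentiates the formula, uses that $\partial^\alpha(\Theta^{-1/2})$ is a polynomial in $\Theta^{-1/2},\Theta^{-1}$ and the $\partial^\beta\Theta$, and closes an induction on $k$; no exponential factor is lost since the segment $x\to\gamma_{x,x'}(s)$ has length $\le d(x,x')$.

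Thus everything reduces to part (a), and there the only non‑elementary point is a \emph{lower} bound $\Theta(x,x')\ge c\,e^{-Cd(x,x')}$. Indeed the \emph{upper} bounds need nothing beyond bounded geometry: along a unit-speed geodesic, $\Theta=\det A(t)/t^{n-1}$ where $A$ is the normal Jacobi endomorphism with $A(0)=0$, $A'(0)=I$, and Rauch comparison with $\sec\ge -K$ gives $\det A(t)\le\big(\sinh(\sqrt K\,t)/\sqrt K\big)^{n-1}$, whence $\Theta\le\mathcal{O}(1)e^{\mathcal{O}(t)}$; the derivatives $\partial^\beta\Theta$ in the second variable solve the linearised (second-order) Jacobi equations, whose inhomogeneities involve only the $\nabla^jR$ and whose homogeneous solutions are built from $A,A',B,B'$ (the two fundamental Jacobi solutions), so by Duhamel and Grönwall they too are $\mathcal{O}(1)e^{\mathcal{O}(t)}$. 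And the lower bound $\Theta\ge c e^{-Cd}$ is exactly what controls $A^{-1}$, hence $(d\exp_x)^{-1}$ and the $x'$-derivatives of $\gamma_{x,x'}(s)$, via $\|(d\exp_x)^{-1}\|\le\|d\exp_x\|^{\,n-1}/\Theta$.

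It remains to establish $\Theta(x,x')\ge c\,e^{-Cd(x,x')}$, and this is where the absence of conjugate points replaces the old curvature hypothesis. Along the geodesic one has $\tfrac{d}{dt}\log\Theta(\gamma(t))=\Tr\big(W(t)-t^{-1}I\big)$ with $W(t)=A'(t)A(t)^{-1}$, a solution of the matrix Riccati equation $W'+W^2+R(t)=0$; the idea is to bound $W$ from below by two comparisons. For small $t$: since $\sec\le K$ for some $K>0$, Rauch comparison with the round model of curvature $K$ gives $W(t)\ge\sqrt K\cot(\sqrt K\,t)\,I$ on $(0,\pi/\sqrt K)$ — valid all the way up to $t=\pi/\sqrt K$ precisely because $\gamma$ has no conjugate point whereas the model's first conjugate time is exactly $\pi/\sqrt K$ — so that $\Tr(W(t)-t^{-1}I)\ge(n-1)\big(\sqrt K\cot(\sqrt K\,t)-t^{-1}\big)$ is bounded below on $(0,\delta]$ with $\delta=\min(1,\pi/(2\sqrt K))$ and thereby absorbs the $t^{-1}$ singularity. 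For $t\ge\delta$: the stable Riccati solution $\mathcal{U}_s(t)=\lim_{T\to\infty}W_T(t)$ (with $W_T$ the solution vanishing at $T$) exists and is symmetric by the classical theory on manifolds without conjugate points, satisfies $\mathcal{U}_s\ge -\sqrt K\,I$ by Riccati comparison with the hyperbolic model of curvature $-K$, and obeys $W(t)\ge\mathcal{U}_s(t)$: since $W(t)\sim t^{-1}I\to+\infty$ as $t\to0^+$ while every $W_T$ stays finite there, the comparison principle for two solutions of the same Riccati equation forces $W\ge W_T$, hence $W\ge\mathcal{U}_s\ge -\sqrt K\,I$ for all $t>0$. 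Integrating the two regimes yields $\log\Theta(\gamma(t))\ge -C(n,K)(1+t)$, which is the desired bound.

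The main obstacle is therefore this lower bound for $\Theta$ — it is the sole place where ``no conjugate points'' is used, through the existence and boundedness of the stable Riccati solution and the Riccati comparison principle. Everything else — the upper bounds via Rauch and Grönwall, the control of $(d\exp_x)^{-1}$ and of the geodesic-endpoint map, the variational estimates on the derivatives of $\Theta^{\pm1/2}$, and the induction through the transport equations — consists of ODE estimates and bookkeeping, made uniform in $x$ by bounded geometry, and gives Lemma \ref{lemma:new}.
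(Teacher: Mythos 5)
Your proposal correctly identifies the crucial reduction: the only curvature-sign-dependent ingredient in Bérard's argument is the bound on $u_0=\Theta^{-1/2}$, which follows from a lower bound $\Theta(x,x')\ge c\,e^{-Cd(x,x')}$, and the propagation to all $u_k$ and their derivatives is routine bookkeeping given bounded geometry. This matches the paper's reduction exactly — the paper likewise states that it suffices to bound $u_0$ and defers the bootstrap to Bérard's appendix.

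Where you diverge is in the proof of the $\Theta$ lower bound itself, and you take a genuinely different and somewhat heavier route. The paper proves the stronger statement that $\vartheta=d^{n-1}\Theta$ is \emph{uniformly} bounded below whenever $d(x,x')>\epsilon$ (Lemma~\ref{lemma:lower-bound}), via Green's bundles: it uses the identity $\mathbb{U}-\mathbb{V}=\mathbb{A}^{-1*}M^{-1}\mathbb{A}^{-1}$ with $M(t)=\int_t^\infty\mathbb{A}^{-1}\mathbb{A}^{-1*}$, bounds both Riccati solutions $\mathbb{U}=\mathbb{A}'\mathbb{A}^{-1}$ and $\mathbb{V}=\mathbb{D}_{+\infty}'\mathbb{D}_{+\infty}^{-1}$ by $k\coth kt$ (Lemma~\ref{lemma:Ricatti}), controls $\|M(t)\|$ by a uniform bound on $\|\mathbb{D}_{-s}'(0)-\mathbb{D}_s'(0)\|$, and concludes via Hadamard's inequality on $\det\mathbb{A}^{-1}$. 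You instead bound $W(t)=\mathbb{A}'\mathbb{A}^{-1}$ from below by comparison with the stable Riccati solution $\mathcal{U}_s$ for $t\ge\delta$ and by the spherical model $\sqrt K\cot(\sqrt K t)$ near $0$, then integrate $\tfrac{d}{dt}\log\Theta=\Tr(W-t^{-1}I)$; this yields the exponentially decaying bound, which is weaker but suffices here. In fact the paper itself observes (remark~(2) before §2.1) that this weaker bound is "probably sufficient" for Lemma~\ref{lemma:new}, and indicates an even simpler route than yours: Lemma~\ref{lemma:Ricatti} applied \emph{directly} to $\mathbb{U}=\mathbb{A}'\mathbb{A}^{-1}$ (which is a symmetric Riccati solution defined for all $t>0$, since there are no conjugate points) gives $|\Tr\mathbb{U}|\le(n-1)k\coth kt$ and hence the exponential lower bound on $\vartheta$ by a single integration, with no need to construct the stable solution $\mathcal{U}_s$ at all. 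So your argument is correct, but the detour through $\mathcal{U}_s$ (existence, symmetry, and the $T\to\infty$ limit, which itself requires the Green-bundle construction) is avoidable — the non-conjugacy assumption already makes $\mathbb{U}$ a globally defined symmetric Riccati solution, which is all that Lemma~\ref{lemma:Ricatti} needs. What the paper buys by doing the extra work is the uniform lower bound on $\vartheta$ (Green's theorem in all dimensions), which is of independent geometric interest beyond the application to Bérard's remainder.
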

The assumption of bounded geometry means here that the curvature tensor and all its covariant derivatives are bounded on $N$, since $\exp_x$ is a global diffeomorphism for any $x\in N$.

\begin{remark}
In \cite{Hassell-Tacy-15}, Hassell and Tacy gave a uniform logarithmic improvement on the $L^p$ norms of eigenfunctions, with the same assumptions on the manifold as in Bérard's theorem. According to their proof, the reason why they need non-positive curvature in dimension $n>2$ is that they use lemma \ref{lemma:old}. Their result can thus be generalized to all closed manifolds without conjugate points.
\end{remark}

To understand the proof, we need to introduce the $\Theta$ function, announced in the title: for $x,x' \in N$
\begin{equation*}
\Theta(x,x') = \det T_{\exp_x^{-1}(x')} \exp_x.
\end{equation*}
As $T_{\exp_x^{-1}(x')} \exp_x$ is a linear application between $T_x N$ and $T_{x'} N$, this determinant is naturally computed taking as reference the volume form $d\vol_g$ at the points $x$ and $x'$. We also define
\begin{equation*}
\vartheta(x,x') = d(x,x')^{n-1} \Theta(x,x').
\end{equation*}
Now, we can give an explicit expression for the coefficients $u_k$ (see \cite{Berard-77}):
\begin{align*}
u_0(x,x') 				&= \frac{1}{\sqrt{\Theta}} \\
u_{k+1}(x,x') 	&= \frac{1}{r^{k+1}\sqrt{\Theta}} \int_0^r s^k \sqrt{\Theta(x,x_s)} (-\Delta_{x'}u_k)(x,x_s) ds.
\end{align*}
where $x' = \exp_x( r u)$ and $x_s = \exp_x(su)$. When investigating the Weyl law for some non-compact manifolds of finite volume with hyperbolic cusps (see \cite{Bonthonneau-4}), it was convenient to introduce a modified version of the Hadamard parametrix. This involved new coefficients $\tilde{u}_k$, which satisfy
\begin{align*}
\tilde{u}_0(x,x') 				&= \sqrt{\frac{\sinh(r)^{n-1}}{\vartheta}} \\
\tilde{u}_{k+1}(x,x') 	&= \frac{1}{\sinh(r)}\int_0^r \left(\frac{\sinh(s)}{\sinh(r)}\right)^{k - \frac{n-1}{2}} \sqrt{\frac{\vartheta(x,x_s)}{\vartheta(x,x')}} \left(-\Delta_{x'} + k^2 - n +1\right)\tilde{u}_k(x,x_s) ds,
\end{align*}
still with $r= d(x,x')$. The proof of lemma \ref{lemma:old} which can be found in the appendix of Bérard's article is sufficiently robust so that we can make two remarks:
\begin{itemize}
	\item The same proof with the same result applies to the coefficients $\tilde{u}_k$.
	\item To complete the proof in the general case, it suffices to prove the most basic estimate, that is, $u_0 = \mathcal{O}(1)e^{\mathcal{O}(d(x,x'))}$. This fact was actually hinted at in the last remark of Bérard's paper.
\end{itemize}

The proof of lemma \ref{lemma:new} will therefore be complete if we can prove this Riemannian geometry lemma:
\begin{lemma}\label{lemma:lower-bound}
Assume that $(N,g)$ is a complete simply-connected manifold without conjugate points, and bounded sectional curvature. For all $\epsilon>0$, there is a constant $C>0$ so that $\vartheta(x,x') > C$ whenever $d(x,x')>\epsilon$.
\end{lemma}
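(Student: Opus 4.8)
The plan is to turn the statement into an inequality for Jacobi fields and then exploit the stable Jacobi tensor. Let $\gamma$ be the unit-speed geodesic from $x$ to $x'$ (unique, since $\exp_x$ is a diffeomorphism), set $r=d(x,x')$, and let $A(\sigma)$ be the normal Jacobi tensor along $\gamma$ with $A(0)=0$, $A'(0)=\mathrm{Id}$ (solving $A''+R_\gamma A=0$ on $\dot\gamma^\perp$, trivialized by parallel transport, where $R_\gamma=R(\cdot,\dot\gamma)\dot\gamma$). By the Gauss lemma, $T_{\exp_x^{-1}(x')}\exp_x$ is the identity on the radial line and equals $\tfrac1r A(r)$ on $\dot\gamma^\perp$, so $\Theta(x,x')=r^{-(n-1)}\det A(r)$ and therefore $\vartheta(x,x')=\det A(r)$. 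Since $\exp_x$ is a global diffeomorphism, $A(\sigma)$ is invertible for $\sigma>0$, and $A(\sigma)\sim\sigma\,\mathrm{Id}$ near $0$ gives $\det A(\sigma)>0$ for all $\sigma>0$. A naive Rauch/Günther comparison gives $\det A(r)\ge(\sin(\kappa r)/\kappa)^{n-1}$ only for $r$ below the model conjugate radius $\pi/\kappa$ (with $|K|\le\kappa^2$), so the real content is a uniform lower bound for large $r$.

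For this I would bring in the stable Jacobi tensor $A_s$ along the complete geodesic $\gamma$, normalized by $A_s(0)=\mathrm{Id}$ — the limit as $T\to+\infty$ of the normalized Jacobi tensors vanishing at $\gamma(T)$. The inputs I need are standard for complete simply connected manifolds without conjugate points and bounded geometry (Green in dimension $2$, Eschenburg--O'Sullivan/Eberlein in general): $A_s$ exists and is invertible on all of $\R$, the Riccati operator $U_s:=A_s'A_s^{-1}$ is symmetric, and $\|U_s(\sigma)\|\le\kappa$ for all $\sigma$. (The last bound follows from the first two by a short Riccati argument: $U_s$ is then smooth everywhere, so if an eigenvalue of $U_s$ left $[-\kappa,\kappa]$, the equation $U_s'=-U_s^2-R_\gamma$ would drive it to $\pm\infty$ in finite time, forcing $A_s$ to be singular.) Writing $\delta(\sigma):=\det A_s(\sigma)>0$, the bound on $U_s$ forces all singular values of $A_s(\sigma)$ to lie in $[e^{-\kappa\sigma},e^{\kappa\sigma}]$ for $\sigma\ge0$, hence $e^{-(n-1)\kappa\sigma}\le\delta(\sigma)\le e^{(n-1)\kappa\sigma}$, and $\operatorname{tr}U_s=(\log\delta)'\ge-(n-1)\kappa$.

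The engine is the Liouville (variation of parameters) identity: because $U_s$ is symmetric and $A_s$ invertible, $A(r)=A_s(r)\int_0^r(A_s^{T}A_s)^{-1}(\sigma)\,d\sigma$, so $\det A(r)=\delta(r)\det\bigl(\int_0^r(A_s^{T}A_s)^{-1}\bigr)$; Minkowski's determinant inequality on the positive-definite integrand then gives $\det A(r)\ge\delta(r)\bigl(\int_0^r\delta(\sigma)^{-2/(n-1)}\,d\sigma\bigr)^{n-1}$. Now fix $\delta_0\in(0,1)$ and split. If $\delta(r)\ge\delta_0$: using $\delta(\sigma)\le e^{(n-1)\kappa\sigma}$ one gets $\int_0^r\delta(\sigma)^{-2/(n-1)}d\sigma\ge\int_0^r e^{-2\kappa\sigma}d\sigma=\tfrac{1-e^{-2\kappa r}}{2\kappa}\ge\tfrac{1-e^{-2\kappa\epsilon}}{2\kappa}$, so $\det A(r)\ge\delta_0\bigl(\tfrac{1-e^{-2\kappa\epsilon}}{2\kappa}\bigr)^{n-1}$. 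If $\delta(r)=\eta<\delta_0$: put $L=\tfrac{1}{(n-1)\kappa}\log(1/\eta)$, which satisfies $L\le r$ because $\eta=\delta(r)\ge e^{-(n-1)\kappa r}$; since $\operatorname{tr}U_s\ge-(n-1)\kappa$, running backward from $r$ gives $\delta(\sigma)\le\eta\,e^{(n-1)\kappa(r-\sigma)}$ on $[r-L,r]$, hence $\int_0^r\delta(\sigma)^{-2/(n-1)}d\sigma\ge\eta^{-2/(n-1)}\tfrac{1-e^{-2\kappa L}}{2\kappa}$, and therefore $\det A(r)\ge\eta^{-1}\bigl(\tfrac{1-e^{-2\kappa L}}{2\kappa}\bigr)^{n-1}\ge\delta_0^{-1}\bigl(\tfrac{1-e^{-2\kappa L_0}}{2\kappa}\bigr)^{n-1}$ with $L_0=\tfrac{1}{(n-1)\kappa}\log(1/\delta_0)$, using $\eta<\delta_0$. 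Both lower bounds are positive constants depending only on $n$, $\kappa$, $\epsilon$, $\delta_0$, and their minimum is the desired $C$.

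The place where real work, and the hypotheses, genuinely enter is the stable Jacobi tensor: its existence, its invertibility on all of $\R$, and $\|U_s\|\le\kappa$ — bounded geometry is what makes the limiting construction converge (and feeds the Riccati bound), while "no conjugate points" is what keeps $A_s$ nonsingular; so this is the main obstacle, and I would either cite it or include the short Riccati argument above. Everything downstream — the Liouville identity, Minkowski, and the two-case estimate — is elementary. As a consistency check, in real hyperbolic space $A_s(\sigma)=e^{-\kappa\sigma}\,\mathrm{Id}$, $\delta(\sigma)=e^{-(n-1)\kappa\sigma}$, and every inequality above is an equality, recovering $\vartheta(x,x')=(\sinh r/\kappa)^{n-1}$.
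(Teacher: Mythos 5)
Your proof is correct, and it takes a genuinely different route from the paper's, even though both hinge on the existence of the stable Jacobi tensor. You write $A(r) = A_s(r)\int_0^r (A_s^T A_s)^{-1}(\sigma)\,d\sigma$ (the variation-of-parameters identity, whose validity rests on $W(A_s,A_s)=0$, i.e.\ symmetry of $U_s$), take determinants, apply the continuous Minkowski determinant inequality, and finish with a two-case split on $\delta(r)=\det A_s(r)$. The paper instead works at the Riccati level: it writes $\mathbb{U}(t)-\mathbb{V}(t)=\mathbb{A}(t)^{-\ast}M(t)^{-1}\mathbb{A}(t)^{-1}$ with $M(t)=\int_t^\infty(\mathbb{A}^\ast\mathbb{A})^{-1}$, uses the bound $|\mathbb{V}|,|\mathbb{U}|\le k\coth kt$ (its Lemma \ref{lemma:Ricatti}) to get $\|\mathbb{A}^{-1}(t)\|^2\le 2k\coth kt\,\|M(t)\|$, then invokes Hadamard's inequality, and closes by proving a separate lemma — via a Banach fixed-point argument with bounded-curvature constants — that $\|M(t)\|\le\|\mathbb{D}_{-s}'(0)-\mathbb{D}_s'(0)\|$ is uniformly bounded for small fixed $s$.

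The trade-off is worth noting. You need the stable Riccati solution $U_s$ defined and bounded by $\kappa$ on \emph{all} of $[0,\infty)$ including at $\sigma=0$ (the $k\coth kt$ bound of Lemma \ref{lemma:Ricatti} is not integrable at $0$, so it would not give your upper bound $\delta(\sigma)\le e^{(n-1)\kappa\sigma}$). Getting $\|U_s\|\le\kappa$ rather than $k\coth kt$ requires the stable tensor to extend, nonsingular, to the whole line, and then a translation-to-$-\infty$ argument. This is standard (Eberlein, and your Riccati sketch is the right idea), but it is strictly more than what the paper establishes: the paper's explicit formula $\mathbb{D}_{+\infty}(s)=\mathbb{A}(s)\int_s^\infty(\mathbb{A}^\ast\mathbb{A})^{-1}$ only gives invertibility for $s\ge0$, and the Riccati lemma alone only gives $k\coth kt$. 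In exchange, you avoid the paper's fixed-point lemma entirely and get a bound that depends only on $n$, $\kappa$, $\epsilon$. Both proofs are correct and of comparable length; yours is arguably more self-contained once the full-line stable tensor is granted, while the paper's uses a weaker form of the stable-tensor input at the price of one extra elementary lemma. You should either cite the global nonsingularity of $A_s$ (e.g.\ Eberlein's Proposition 2) or supply the translation argument explicitly — as written, that step is the one genuine gap in your write-up, though it is a gap you flag yourself.
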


The rest of this note is devoted to the proof of this lemma. For surfaces, it is due to Green (see lemma 2 in \cite{Green-56}). While it may have been known for a while, we did not find any published statement, or proof, for the general case. In Eberlein \cite{Eberlein-73}, one can find a proof that for any $(x,u)$, $\lim_{t\to \infty} \vartheta(x,\exp_x(tu)) = + \infty$, but the convergence is not uniform in $(x,u)$ in higher dimension, so that this is not enough to deduce lemma \ref{lemma:lower-bound}. As a special case, Goto \cite{Goto-78} proved the lemma for manifolds with no \emph{focal points}.

\textbf{Acknowledgement} It was a great pleasure to discuss the matters of this note with Pierre H. Bérard. He communicated a note \cite{Berard-16} with a different take on the proof of lemma \ref{lemma:lower-bound}, via Bochner's formula.

\section{Proof}

The arguments we use are somewhat elementary, and they are inspired by the original proof of Green \cite{Green-58}, and some arguments from Eberlein \cite{Eberlein-73}. However, our proof is (almost) self-contained. 

There is a direct link between the $\Theta$ function and Jacobi fields. Let us fix for the moment a geodesic $\gamma(t)$, starting at $x$, of the form $\exp_x(tu)$. Then we choose a direct orthonormal basis in $T_x N$, whose last vector is $u$. Using parallel transport, this defines a family of parallel direct orthonormal frames along $\gamma$, and we can express Jacobi fields in those frames. They are found to satisfy the usual \emph{matrix} equation:
\begin{equation}\label{eq:Jacobi}\tag{$\ast$}
X''(t) + \mathbb{K}(t) X(t) = 0
\end{equation}
where $\mathbb{K} (t) X(t) = R_{\gamma(t)}(X(t), \dot{\gamma}(t)) \dot{\gamma}(t)$, $R$ being the curvature tensor of $N$. In particular, $\mathbb{K}$ is a symmetric matrix, and $\mathbb{K}(t)\dot{\gamma} = 0$, so $\mathbb{K}(t)$ preserves the orthogonal of $\dot{\gamma}$. Hence we can decompose the Jacobi fields into a parallel part $c(t) \dot{\gamma}(t)$, and an orthogonal part. The parallel coefficient $c(t)$ is of the form $a t + b$.

In these coordinates, the matrix for $T_{t u} \exp_x$ is $t^{-1}\mathbb{A}(t)$ where $\mathbb{A}(t)$ is the Jacobi matrix field such that $\mathbb{A}(0)=0$ and $\mathbb{A}'(0) = \mathbb{1}$. If we decompose this into parallel and orthogonal fields, the parallel part is of course $1$, so we can abuse notations, and still denote by $\mathbb{A}(t)$ the orthogonal field. In what follows, we will only deal with orthogonal fields.

With the notations above, 
\begin{equation*}
\Theta(x, \exp_x(tu)) = \frac{det(\mathbb{A}(t))}{t^{n-1}}.
\end{equation*}
The condition that there are no conjugate points is equivalent to assuming that the field $\mathbb{A}(t)$ is invertible for $t\neq 0$, independently of the vector $(x,u)$ (or equivalently, that for all $x$, $\exp_x$ is a global diffeomorphism). We also have
\begin{equation*}
\vartheta(x, \exp_x(tu)) = \det \mathbb{A}(t).
\end{equation*}
We will use the Ricatti equation associated to \eqref{eq:Jacobi}, that is
\begin{equation}\label{eq:Ricatti}\tag{$\ast \ast$}
\mathbb{V}' + \mathbb{V}^2 + \mathbb{K} = 0.
\end{equation}
This is also a matrix-valued equation along $\gamma(t)$, and it is satisfied for $\mathbb{V}$'s of the form $\mathbb{B}' \mathbb{B}^{-1}$, where $\mathbb{B}$ is an invertible solution of \eqref{eq:Jacobi}. The non-conjugacy assumption will imply the existence of solutions to \eqref{eq:Ricatti} on the interval $[0,+\infty)$, and this can be seen as the conceptual argument behind the proof.

The following lemma is fundamental for our argument. It can be found in Green for surfaces (see lemma 3 in \cite{Green-58}), or in Eberlein in this level of generality (lemma 2.8 in \cite{Eberlein-73}). It is actually a generalization of a result on Sturm-Liouville equations, known at least since E. Hopf. When we write an inequality between two matrices, they are assumed to be symmetric, and it means that the corresponding inequality holds between the associated quadratic forms.
\begin{lemma}\label{lemma:Ricatti}
Let $\mathbb{V}$ be a symmetric solution to the Ricatti equation \eqref{eq:Ricatti}, defined for all $t>0$, then $|\mathbb{V}| \leq k \coth k t$ as soon as $\mathbb{K} \geq - k^2 \mathbb{1}$ for all $t>0$.
\end{lemma}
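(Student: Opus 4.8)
The plan is to compare the Riccati solution $\mathbb{V}$ with the scalar solution of the model equation $v' + v^2 - k^2 = 0$, whose relevant solution is $v(t) = k\coth(kt)$ (the one blowing up at $t=0$, hence dominating any solution coming from a Jacobi field vanishing at the origin). I would argue quadratic-form-wise: fix a parallel unit vector field $e(t)$ along $\gamma$ orthogonal to $\dot\gamma$, and set $\phi(t) = \langle \mathbb{V}(t) e(t), e(t)\rangle$. Differentiating and using \eqref{eq:Ricatti} together with the Cauchy--Schwarz-type bound $\langle \mathbb{V}^2 e, e\rangle \geq \langle \mathbb{V} e, e\rangle^2 = \phi^2$ (valid since $\mathbb{V}$ is symmetric, so $\langle \mathbb{V}^2 e,e\rangle = |\mathbb{V} e|^2 \geq \langle \mathbb{V} e, e\rangle^2$), and the curvature bound $\langle \mathbb{K} e, e\rangle \leq k^2$, one gets the differential inequality $\phi' \leq -\phi^2 + k^2$.

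Next I would turn this into the upper bound $\phi(t) \leq k\coth(kt)$ for all $t > 0$. The key point is that $\phi$ is defined on all of $(0,\infty)$: if we had $\phi(t_0) > k\coth(kt_0)$ at some $t_0 > 0$, then running the comparison ODE $\psi' = -\psi^2 + k^2$ backward from $(t_0, \phi(t_0))$ — where $\psi$ dominates $\phi$ going backward since $\phi' \le -\phi^2+k^2$ — one finds $\psi$, and hence $\phi$, forced to $+\infty$ at some time strictly inside $(0, t_0)$, contradicting that $\mathbb{V}$ is defined on all of $(0,\infty)$. (Here I use that $k\coth(kt)$ is the maximal solution of the Riccati comparison equation, the one with a pole as $t \to 0^+$; any solution lying above it at some positive time must have blown up even earlier.) This gives $\langle \mathbb{V}(t) e, e\rangle \leq k\coth(kt)$ for every parallel orthogonal unit field $e$. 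Applying the same argument to $-\mathbb{V}$, which satisfies $(-\mathbb{V})' + (-\mathbb{V})^2 + \widetilde{\mathbb{K}} = 0$ with $\widetilde{\mathbb{K}} = -\mathbb{K} + 2\mathbb{V}^2 \ge \mathbb{K}$... — actually cleaner: note $-\mathbb{V}$ need not solve a Riccati equation with a good curvature bound, so instead I would directly redo the scalar computation for $-\phi$, obtaining $(-\phi)' = -\phi' \le \phi^2 - k^2 \le$ ... this does not close. The honest fix: observe that $\psi := -\phi$ satisfies $\psi' = -\phi' \ge \phi^2 - k^2 = \psi^2 - k^2$, i.e. $\psi' \ge \psi^2 - k^2$, which by a symmetric blow-up-in-forward-time argument (a solution below $-k\coth(kt)$ at some $t_0$ is forced to $-\infty$ before $t_0$) again forces $\psi(t) \ge -k\coth(kt)$. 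Combining, $|\langle \mathbb{V}(t)e,e\rangle| \le k\coth(kt)$ for all orthogonal unit $e$, which since $\mathbb{V}(t)$ is symmetric is exactly $|\mathbb{V}(t)| \le k\coth(kt)$, the claimed operator-norm bound.

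The main obstacle is making the "defined for all $t>0$ forces the bound" step rigorous: one must be careful that $\mathbb{V}$ is genuinely a global symmetric solution (which, as noted in the paragraph preceding the lemma, is guaranteed by the non-conjugacy assumption via $\mathbb{V} = \mathbb{B}'\mathbb{B}^{-1}$ for a suitable invertible Jacobi field $\mathbb{B}$), and that the scalar comparison is set up so that lying above $k\coth(kt)$ at one point really does propagate a pole backward into $(0,t_0)$ — this is a standard but slightly delicate Sturm-type phase-plane argument for $\psi' = k^2 - \psi^2$, whose solutions above the line $\psi = k$ decrease toward $k$ forward in time and increase to $+\infty$ in finite backward time. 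Everything else (the Cauchy–Schwarz step, the curvature bound, the passage from quadratic forms to operator norm) is routine.
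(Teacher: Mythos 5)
The paper itself does not prove Lemma~\ref{lemma:Ricatti}: it cites Green (lemma~3 of \cite{Green-58}) and Eberlein (lemma~2.8 of \cite{Eberlein-73}), so there is no internal proof to compare against. Your overall strategy --- reduce to the scalar quadratic form $\phi(t)=\langle \mathbb{V}(t)e,e\rangle$, derive $\phi'\leq k^2-\phi^2$ from symmetry, Cauchy--Schwarz and the curvature bound, then force the bound by a Riccati comparison combined with the global existence of $\mathbb{V}$ --- is the standard and correct route, and your upper bound $\phi(t)\leq k\coth(kt)$ via the backward blow-up argument is sound.

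There is, however, a genuine gap in the lower bound. Setting $\psi=-\phi$ you correctly get $\psi'\geq\psi^2-k^2$, but the conclusion you then state, $\psi(t)\geq-k\coth(kt)$, unwinds to $-\phi\geq-k\coth(kt)$, i.e.\ $\phi\leq k\coth(kt)$ --- the upper bound you already have, not the needed $\phi\geq-k\coth(kt)$. Your parenthetical justification (``a solution below $-k\coth(kt)$ \ldots is forced to $-\infty$ before $t_0$'') is also a backward-in-time argument, despite being labelled forward-time, and it indeed only produces the backward bound $\psi\geq-k\coth(kt)$. The bound actually required is $\psi\leq k\coth(kt)$ (equivalently $\phi\geq-k\coth(kt)$), and it comes from the \emph{forward}-time blow-up: if $\phi(t_0)<-k\coth(kt_0)$ (in fact $\phi(t_0)<-k$ suffices), then $\phi'\leq k^2-\phi^2<0$ and comparison with the solution $\chi(t)=-k\coth(k(c-t))$ of $\chi'=k^2-\chi^2$ with $\chi(t_0)=\phi(t_0)$, which blows up to $-\infty$ at a finite $c>t_0$, forces $\phi(t)\leq\chi(t)\to-\infty$ in finite forward time, contradicting that $\mathbb{V}$ is defined on all of $(0,\infty)$. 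This even yields the stronger one-sided bound $\phi\geq -k$. So the idea is right and the fix is short, but the submitted argument as written establishes only $\phi\leq k\coth(kt)$ twice and never the lower bound, so ``combining'' does not yet give $|\mathbb{V}|\leq k\coth(kt)$.
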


Before going any further, let us make two remarks

\begin{enumerate}
	\item It is useful to recall that if $\mathbb{B}$ and $\mathbb{C}$ are two solutions of \eqref{eq:Jacobi}, their Wronskian is $\mathbb{W}(\mathbb{B},\mathbb{C}) = \mathbb{B}^\ast \mathbb{C}' - \mathbb{B}'^\ast \mathbb{C}$ --- here $L^\ast$ is the transpose of $L$. It is constant. In particular, if $\mathbb{W}(\mathbb{B}, \mathbb{B}) = 0$ and $\mathbb{B}$ is invertible, the associated solution  $\mathbb{B}'\mathbb{B}^{-1}$ of \eqref{eq:Ricatti} is symmetric.
	\item Let $\mathbb{U} = \mathbb{A}' \mathbb{A}^{-1}$. One can check that it \emph{is} a symmetric solution to \eqref{eq:Ricatti}. Additionally, we find
\begin{equation*}
\frac{d}{dt} \vartheta(x,\exp_x(tu)) = \vartheta(x,\exp_x(tu)) \Tr \mathbb{U}(t).
\end{equation*}
In particular, this implies the existence of a bound of the form $| \vartheta(t)|^{-1} = \mathcal{O}(t^{1-n})e^{\mathcal{O}(t)}$, where the constants are independent of $x$ and $u$. This would probably be sufficient to obtain lemma \ref{lemma:new}, but we will nonetheless go on with the proof of lemma \ref{lemma:lower-bound}.
\end{enumerate}

\subsection{Green's method}

In this section, we recall the proof of existence of Green’s bundles. As a corollary of the proof, we find a bound that is exactly what we need. Let us introduce further notations. The vector $(x,u)$ is still fixed.

For $t\neq 0$, the field $\mathbb{D}_t$ is the unique solution to \eqref{eq:Jacobi} such that $\mathbb{D}_t(0) = \mathbb{1}$, and $\mathbb{D}_t(t) = 0$. The existence and unicity of such a field is assured by the non-conjugacy assumption. Green’s bundles are the subbundles $X_{\pm}$ in $TSN$ such that ${X_{\pm}}_{(x,u)} = \{ (\xi, \mathbb{D}_{\pm \infty}'(0)\cdot \xi), \xi \in T_x N \}$. Here $\mathbb{D}_{\pm \infty}$ is the limit of the fields $\mathbb{D}_t$ as $t\to \pm \infty$. Let us recall the proof of existence of such limits. 

First, there is an explicit expression for $\mathbb{D}_t$:
\begin{equation*}
\mathbb{D}_t(s) = \mathbb{A}(s) \int_s^t \mathbb{A}(\ell)^{-1}\mathbb{A}(\ell)^{-1 \ast}d\ell.
\end{equation*}
This is valid when $s$ and $t$ have the same sign. Observe that, still if $s$ and $t$ have the same sign,
\begin{equation*}
\mathbb{D}_t'(0) - \mathbb{D}_s'(0) = \int_s^t \mathbb{A}(\ell)^{-1} \mathbb{A}(\ell)^{-1 \ast} d\ell.
\end{equation*}
In particular, the matrices $\mathbb{D}_t'(0) - \mathbb{D}_s'(0)$ are symmetric, and as symmetric matrices, with $s>0$ fixed, the family $t\mapsto \mathbb{D}_t'(0) - \mathbb{D}_s'(0)$ is increasing. We want to show that it has a limit. It suffices to prove that it is bounded from above.

Since for any $t>0$, $\mathbb{A}$ et $\mathbb{D}_t$ are two independent solutions of \eqref{eq:Jacobi}, we can write
\begin{equation*}
\mathbb{D}_{-s} = \mathbb{A} \mathbb{N}_{s,t} + \mathbb{D}_t
\end{equation*}
where $\mathbb{N}_{s,t}$ is a constant matrix. We find
\begin{equation}\label{eq:pre-borne}
\mathbb{D}_{-s}'(0) - \mathbb{D}_t'(0) = \mathbb{N}_{s,t}.
\end{equation}
But we also have
\begin{equation*}
\mathbb{N}_{s,t} = - \mathbb{A}(-s)^{-1} \mathbb{D}_t(-s)
\end{equation*}
This implies that $N_{s,t}$ is symmetric positive definite for all $s,t>0$. Indeed, $\mathbb{N}_{-t,t} = 0$, and for $s\neq 0$,
\begin{equation*}
\frac{d}{ds}(\mathbb{N}_{s,t} - \mathbb{N}_{s,t}^\ast) = \mathbb{A}^{-1}\mathbb{A}^{-1 \ast}(s) \mathbb{W}(\mathbb{A}, \mathbb{D}_t) + \mathbb{W}(\mathbb{D}_t, \mathbb{A})\mathbb{A}^{-1}\mathbb{A}^{-1 \ast}(s) = 0.
\end{equation*}
This proves that $N_{s,t}$ is symmetric for all $s<0$. Now, examining $s\to 0$, we find that $\mathbb{N} - \mathbb{N}^\ast$ is continuous at $s=0$, so that $N_{s,t}^\ast = N_{s,t}$ for all $s$. To check that $\mathbb{N}$ is positive definite, it suffices to remark, as Green did, that for $s>0$ small enough, it is true because $\mathbb{N}_{s,t} \sim (1/s) \mathbb{1}$. Next, for it to stop being true, the determinant would need to vanish at some $s>0$. But $\mathbb{A}(-s)$ and $\mathbb{D}_t(-s)$ are invertible for $s>0$, so this is not possible.

As a consequence, for $t>0$ and $s>0$, according to \eqref{eq:pre-borne},
\begin{equation*}
\mathbb{D}_t'(0) - \mathbb{D}_{-s}'(0)  \leq 0
\end{equation*}
and so for $s>0$, and $t>s$,
\begin{equation*}
\mathbb{D}_t'(0) - \mathbb{D}_s'(0) \leq \mathbb{D}_{-s}'(0) - \mathbb{D}_s'(0).
\end{equation*}
Letting $t$ go to $+\infty$, we find:
\begin{equation*}
M(s) : =\lim_{t\to +\infty} \mathbb{D}_t'(0) - \mathbb{D}_s'(0) = \int_s^{+\infty} \mathbb{A}(\ell)^{-1}\mathbb{A}(\ell)^{-1 \ast} d\ell < \infty.
\end{equation*}
As $M(s)$ is decreasing, for $t>s$,
\begin{equation*}
M(t) \leq \mathbb{D}_{-s}'(0) - \mathbb{D}_s'(0).
\end{equation*}
We also find that for $s>0$,
\begin{equation}\label{eq:D-infty}
\mathbb{D}_{+\infty}(s) = \mathbb{A}(s) \int_s^{+\infty} \mathbb{A}(\ell)^{-1}\mathbb{A}(\ell)^{-1 \ast} d\ell.
\end{equation}

\subsection{End of the proof}

From formula \eqref{eq:D-infty}, we deduce that $\mathbb{D}_{+\infty}(s)$ is invertible for every $s\geq 0$. Hence the associated solution of \eqref{eq:Ricatti} $\mathbb{V} = \mathbb{D}_{+\infty}'\mathbb{D}_{+\infty}^{-1}$ is defined at least for all $s\geq 0$. It is also symmetric. Indeed, for any $t$, we have $\mathbb{W}( \mathbb{D}_t, \mathbb{D}_t )=0$ by evaluation at $t$. Letting $t\to + \infty$, we get $\mathbb{W}( \mathbb{D}_{+\infty}, \mathbb{D}_{+\infty} )=0$.

Now, we want to apply lemma \ref{lemma:Ricatti} on solutions of \eqref{eq:Ricatti}. We consider
\begin{equation*}
\mathbb{U}(t) - \mathbb{V}(t) = \mathbb{A}(t)^{-1 \ast} M(t)^{-1} \mathbb{A}(t)^{-1}.
\end{equation*}
In particular,
\begin{equation*}
\left|\mathbb{A}(t)^{-1 \ast} M(t)^{-1} \mathbb{A}(t)^{-1} \right| \leq 2 k \coth kt.
\end{equation*}
As $M(s)$ is symmetric, positive definite, $M^{-1}(s) \geq 1/\| M(s)\|_2$, and
\begin{equation*}
2\| M(t)\|_2 k \coth kt \geq \mathbb{A}(t)^{-1 \ast}\mathbb{A}(t)^{-1}
\end{equation*}
As a consequence,
\begin{equation*}
\| \mathbb{A}^{-1} (t)\|_2^2  \leq 2\| M(t)\|_2 k \coth kt
\end{equation*}
We finally get the result of this computation: for $t>s>0$,
\begin{equation*}
\| \mathbb{A}(t)^{-1} \|_2^2  \leq 2 k \coth kt \| \mathbb{D}_{-s}'(0) - \mathbb{D}_s'(0)\|_2
\end{equation*}
and according to Hadamard's inequality,
\begin{equation*}
\vartheta^{-1}(t) \leq \left\{2 k \coth kt \| \mathbb{D}_{-s}'(0) - \mathbb{D}_s'(0)\|_2\right\}^{\frac{n-1}{2}}
\end{equation*}

Now, the last step of the proof is 
\begin{lemma}
The quantity $\| \mathbb{D}_{-s}'(0) - \mathbb{D}_s'(0)\|_2$ is bounded independently of $x$ and $u$, provided that $s>0$ is small enough. For this, it suffices to assume that the sectional curvature of $(N,g)$ is bounded. 
\end{lemma}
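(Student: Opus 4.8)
\emph{Proof plan.} The idea is to express $\mathbb{D}_{\pm s}'(0)$ through the two standard fundamental solutions of the Jacobi equation \eqref{eq:Jacobi} on the \emph{short} interval $[-s,s]$, and to control those by elementary Volterra/Gronwall estimates that only see the uniform bound on $\mathbb{K}$. Let $\mathbb{C}(t)$ be the solution of \eqref{eq:Jacobi} with $\mathbb{C}(0)=\mathbb{1}$ and $\mathbb{C}'(0)=0$; together with $\mathbb{A}$ it gives the general solution of the matrix equation through $X(t)=\mathbb{C}(t)X(0)+\mathbb{A}(t)X'(0)$. Since $\mathbb{D}_t(0)=\mathbb{1}$ we may write $\mathbb{D}_t=\mathbb{C}+\mathbb{A}\,\mathbb{D}_t'(0)$, and the boundary condition $\mathbb{D}_t(t)=0$ gives, whenever $\mathbb{A}(t)$ is invertible,
\[
\mathbb{D}_t'(0) = -\mathbb{A}(t)^{-1}\mathbb{C}(t).
\]
In particular $\mathbb{D}_{\pm s}'(0) = -\mathbb{A}(\pm s)^{-1}\mathbb{C}(\pm s)$, so it is enough to bound $\|\mathbb{A}(\pm s)^{-1}\|_2$ and $\|\mathbb{C}(\pm s)\|_2$ for a single fixed small $s>0$, uniformly in $(x,u)$.

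By the bounded sectional curvature hypothesis there is $k>0$, independent of $(x,u)$ and of $t$, with $\|\mathbb{K}(t)\|_2\leq k^2$. Writing \eqref{eq:Jacobi} in integral form,
\[
\mathbb{C}(t) = \mathbb{1} - \int_0^t (t-\tau)\,\mathbb{K}(\tau)\mathbb{C}(\tau)\,d\tau, \qquad \mathbb{A}(t) = t\,\mathbb{1} - \int_0^t (t-\tau)\,\mathbb{K}(\tau)\mathbb{A}(\tau)\,d\tau,
\]
a Gronwall comparison (the kernel $(t-\tau)k^2$ being nonnegative) yields $\|\mathbb{C}(t)\|_2 \leq \cosh(k|t|)$ and $\|\mathbb{A}(t)\|_2 \leq k^{-1}\sinh(k|t|)$; feeding the latter back into the second integral gives
\[
\|\mathbb{A}(t) - t\,\mathbb{1}\|_2 \leq |t|\cosh(k|t|)-k^{-1}\sinh(k|t|) = \mathcal{O}(k^2|t|^3)
\]
with a universal implied constant for $k|t|\leq 1$.

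It remains to choose $s>0$ small enough, depending only on $k$, so that $\|\mathbb{A}(\pm s)-(\pm s)\mathbb{1}\|_2 \leq s/2$ and $\cosh(ks)\leq 2$. Then $\mathbb{A}(\pm s)$ is invertible with $\|\mathbb{A}(\pm s)^{-1}\|_2 \leq 2/s$, hence $\|\mathbb{D}_{\pm s}'(0)\|_2 \leq 4/s$, and therefore $\|\mathbb{D}_{-s}'(0)-\mathbb{D}_s'(0)\|_2 \leq 8/s$, a bound independent of $(x,u)$. (This matches the heuristic $\mathbb{D}_{-s}'(0)-\mathbb{D}_s'(0) = \tfrac{2}{s}\mathbb{1}-\tfrac{2s}{3}\mathbb{K}(0)+\mathcal{O}(s^3)$.) There is no genuinely hard step here; the one point that must be handled carefully is \emph{uniformity}: the smallness threshold for $s$ and every constant above depend on $(x,u)$ only through the single curvature bound $k$, which is precisely what the bounded geometry assumption supplies, and this is the sole place that hypothesis is used. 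Note also that the invertibility of $\mathbb{A}(\pm s)$ invoked in the first paragraph is automatic for small $s$ from the estimate of the second paragraph, so this final argument does not even need the global absence of conjugate points.
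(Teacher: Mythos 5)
Your proof is correct and takes essentially the same route as the paper: both reduce to the identity $\mathbb{D}_{\pm s}'(0)=-\mathbb{A}(\pm s)^{-1}\mathbb{C}(\pm s)$ (the paper's $\mathbb{J}_2,\mathbb{J}_1$) and then control the fundamental solutions of \eqref{eq:Jacobi} on a short interval via the Volterra integral form, with all constants depending only on the curvature bound $k$. The only cosmetic difference is the bookkeeping tool, Gronwall comparison versus the Banach fixed-point iterate bound, and that the paper records the cancellation $\mathbb{D}_s'(0)=-\tfrac1s\mathbb{1}+\mathcal{O}(s)$ while you bound each term separately by $4/s$, which is equally sufficient since $s$ is ultimately fixed.
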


\begin{proof}
Let us fix a vector $(x,u)\in SN$. Let $\mathbb{J}_1$ and $\mathbb{J}_2$ be the two Jacobi matrix fields along $\{\exp_x (tu)\}$ such that
\begin{equation*}
\mathbb{J}_1(0)= \mathbb{1},\ \mathbb{J}_1'(0)= 0,\ \mathbb{J}_2(0)= 0,\ \mathbb{J}_2'(0)= \mathbb{1}.
\end{equation*}
One can check that for $s\neq 0$, $\mathbb{D}'_s(0) = - \mathbb{J}_2(s)^{-1}\mathbb{J}_1(s)$. Whence we deduce that as $s\to 0$,
\begin{equation*}
\mathbb{D}'_s(0) = - \frac{1}{s} \mathbb{1} + \mathcal{O}(s).
\end{equation*}
To prove the lemma, it suffices to prove that the constant in $\mathcal{O}(s)$ can be controlled by the sectional curvature. Let $k_{max}$ be the supremum of $\| \mathbb{K}_{x,u}(t) \|$ for all $x,u,t$. This is actually the supremum of the absolute value of the sectional curvature of $N$. 

Recall from the proof of the Cauchy-Lipschitz theorem that the Jacobi fields $\mathbb{J}_1$ and $\mathbb{J}_2$ are obtained, at least for small times, as fixed point of contraction mappings, respectively
\begin{equation*}
T_1^\tau J(t) = \mathbb{1} + \int_{0}^t (s-t) \mathbb{K}(s)J(s) ds,\ \text{and}\quad T_2^\tau J(t) = t\cdot \mathbb{1} + \int_{0}^t (s-t) \mathbb{K}(s)J(s) ds,
\end{equation*}
which are defined on $C^0$ matrix-valued functions on $t \in [-\tau, \tau]$, equipped with the norm $\| J \|_\tau = \sup_{|t|\leq \tau} \|J(t)\|_2$. These mappings have Lipschitz constant $\eta :=\tau^2 k_{max}/2$, so we take $0< \tau < \sqrt{2/k_{max}}$. From the Banach fixed point theorem, we know that $\mathbb{J}_1 = \lim_n (T^\tau_1)^n(\mathbb{1})$ and $\mathbb{J}_2 = \lim_n (T^\tau_2)^n(t\cdot\mathbb{1})$, and using usual tricks,
\begin{equation*}
\| \mathbb{J}_1 - \mathbb{1} \|_\tau \leq \frac{1}{1-\eta} \| T^\tau_1(\mathbb{1}) - \mathbb{1} \|_\tau,\ \text{and} \quad \| \mathbb{J}_2 - t\cdot\mathbb{1} \|_\tau \leq \frac{1}{1-\eta} \| T^\tau_2(t\cdot \mathbb{1}) - t\cdot\mathbb{1} \|_\tau
\end{equation*}
For $\tau$ small enough, we find that with a constant $C_{max} >0$ only depending on $k_{max}$,
\begin{equation*}
\| \mathbb{J}_1(\tau) - \mathbb{1} \|_2 \leq C_{max} \tau^2,\ \text{and}\quad \| \mathbb{J}_2(\tau) - \tau\cdot\mathbb{1} \|_2 \leq C_{max} \tau^3.
\end{equation*}
\end{proof}

\bibliographystyle{alpha}
\bibliography{../../bibliographies/biblio}
\end{document}